\theoremstyle{plain}
\newtheorem{thm}{Theorem}[section]
\newtheorem{lem}[thm]{Lemma}
\theoremstyle{definition}
\theoremstyle{remark}
 \font\cyr=wncyr10
 \newcommand{\nc}{\newcommand}
\nc{\per}[1]{\underset{#1}{\boldsymbol \pi}\,}
 \nc{\gf}{{\varphi}}
 \nc{\MT}{{\rm MT}}
  \nc{\bgz}{{\bar{\gz}}}
 \nc{\wt}{{\rm wt}}
 \nc{\wht}{{\widehat}}
 \nc{\bwg}{{\bigwedge}}
 \nc{\mmu}{{\boldsymbol{\mu}}}
 \nc{\mal}{{{\scriptstyle \maltese}}}
 \nc{\fA}{{\mathfrak A}}
 \nc{\HH}{{\mathfrak H}}
 \nc{\ra}{\rightarrow}
 \nc{\ors}{{\vec s\,}}
 \nc{\os}{{\overset}}
 \nc{\G}{{\mathbb G}}
 \nc{\Z}{{\mathbb Z}}
 \nc{\R}{{\mathbb R}}
 \nc{\N}{{\mathbb N}}
 \nc{\ZN}{{\mathbb Z_{\ge 0}}}
 \nc{\Q}{{\mathbb Q}}
 \nc{\C}{{\mathbb C}}
 \nc{\Cnn}{{\mathbb C}_{\ge 0}}
 \nc{\Cp}{{\mathbb C}_{>0}}
 \nc{\tB}{{\tilde B}}
 \nc{\tI}{{\tilde I}}
 \nc{\tJ}{{\tilde J}}
 \nc{\tK}{{\tilde K}}
 \nc{\Li}{{\rm Li}}
 \nc{\suf}{{\ast\,}}
 \nc{\sufq}{{\ast_q\,}}
 \nc{\gam}{{\gamma}}
 \nc{\gG}{{\Gamma}}
 \nc{\om}{{\omega}}
 \nc{\vep}{{\varepsilon}}
 \nc{\ga}{{\alpha}}
 \nc{\gl}{{\lambda}}
 \nc{\gb}{{\beta}}
 \nc{\gd}{{\delta}}
 \nc{\gs}{{\sigma}}
 \nc{\gS}{{\Sigma}}
 \nc{\gk}{{\kappa}}
 \nc{\tgz}{{\tilde{\zeta}}}
 \nc{\gO}{{\Omega}}
 \nc{\sif}{{\mathcal S}}
 \nc{\gt}{{\tau}}
\nc{\gz}{{\zeta}}
 \nc{\Lra}{\Longrightarrow}
 \nc{\lra}{\longrightarrow}
 \nc{\fS}{{\mathfrak S}}
 \nc{\DD}{{\mathfrak D}}
 \nc{\Llra}{\Longleftrightarrow}
 \nc{\ol}{\overline}
 \nc{\lms}{\longmapsto}
 \nc{\cv}{{{\mathsf c}{\mathsf v}}}
 \nc{\zq}{{\zeta_q}}
 \nc\qup{{q\uparrow 1}}
 \nc{\us}{\underset}
 \nc{\tn}{{\tilde{n}}}
 \nc{\gD}{{\Delta}}
 \nc{\bi}{{\bf i}}
 \nc{\bfone}{{\bf 1}}
 \nc{\bfa}{{\bf a}}
 \nc{\bfb}{{\bf b}}
 \nc{\bfc}{{\bf c}}
 \nc{\bfd}{{\bf d}}
 \nc{\bfe}{{\bf e}}
 \nc{\bff}{{\bf f}}
 \nc{\bfg}{{\bf g}}
 \nc{\bfh}{{\bf h}}
 \nc{\bfi}{{\bf i}}
 \nc{\bfj}{{\bf j}}
 \nc{\bfn}{{\bf n}}
 \nc{\bfl}{{\bf l}}
 \nc{\bfk}{{\bf k}}
 \nc{\bfm}{{\bf m}}
 \nc{\bfo}{{\bf o}}
 \nc{\bfp}{{\bf p}}
 \nc{\bfq}{{\bf q}}
 \nc{\bfr}{{\bf r}}
 \nc{\tbfs}{{\tilde{\bf s}}}
 \nc{\bfs}{{\bf s}}
 \nc{\hbfs}{{\hat{\bf s}}}
 \nc{\hs}{{\hat{s}}}
 \nc{\ts}{\tilde{s}}
 \nc{\bft}{{\bf t}}
 \nc{\bfu}{{\bf u}}
 \nc{\bfv}{{\bf v}}
 \nc{\bfw}{{\bf w}}
 \nc{\bfx}{{\bf x}}
 \nc{\bfy}{{\bf y}}
 \nc{\bfz}{{\bf z}}
 \nc{\bfB}{{\bf B}}
 \nc{\bfP}{{\bf P}}
 \nc{\bfQ}{{\bf Q}}
 \nc{\bfY}{{\bf Y}}
 \nc{\bfgb}{{\boldsymbol \gb}}
 \nc{\bfga}{{\boldsymbol \ga}}
 \nc{\bfrho}{{\boldsymbol \rho}}
 \nc{\bfchi}{{\boldsymbol \chi}}
 \nc{\QX}{{\Q\langle \bfX\rangle}}
 \nc{\QY}{{\Q\langle \bfY\rangle}}
 \nc{\CX}{{\C\langle \bfX\rangle}}
 \nc{\CY}{{\C\langle \bfY\rangle}}
 \nc{\QXX}{{\Q\langle\!\langle \bfX\rangle\!\rangle}}
 \nc{\QYY}{{\Q\langle\!\langle \bfY\rangle\!\rangle}}
 \nc{\CXX}{{\C\langle\!\langle \bfX\rangle\!\rangle}}
 \nc{\CYY}{{\C\langle\!\langle \bfY\rangle\!\rangle}}
 \nc{\bbA}{{\mathbb A}}
 \nc{\bbB}{{\mathbb B}}
 \nc{\bbC}{{\mathbb C}}
 \nc{\bbD}{{\mathbb D}}
 \nc{\bbE}{{\mathbb E}}
 \nc{\bbF}{{\mathbb F}}
 \nc{\bbG}{{\mathbb G}}
 \nc{\bbH}{{\mathbb H}}
 \nc{\bbI}{{\mathbb I}}
 \nc{\bbJ}{{\mathbb J}}
 \nc{\bbK}{{\mathbb K}}
 \nc{\bbL}{{\mathbb L}}
 \nc{\bbM}{{\mathbb M}}
 \nc{\bbN}{{\mathbb N}}
 \nc{\bbO}{{\mathbb O}}
 \nc{\bbP}{{\mathbb P}}
 \nc{\bbQ}{{\mathbb Q}}
 \nc{\bbR}{{\mathbb R}}
 \nc{\bbS}{{\mathbb S}}
 \nc{\bbT}{{\mathbb T}}
 \nc{\bbU}{{\mathbb U}}
 \nc{\bbV}{{\mathbb V}}
 \nc{\bbW}{{\mathbb W}}
 \nc{\bbX}{{\mathbb X}}
 \nc{\bbY}{{\mathbb Y}}
 \nc{\bbZ}{{\mathbb Z}}
 \nc{\bba}{{\mathbb a}}
 \nc{\bbb}{{\mathbb b}}
 \nc{\bbc}{{\mathbb c}}
 \nc{\bbd}{{\mathbb d}}
 \nc{\bbe}{{\mathbb e}}
 \nc{\bbf}{{\mathbb f}}
 \nc{\bbg}{{\mathbb g}}
 \nc{\bbh}{{\mathbb h}}
 \nc{\bbi}{{\mathbb i}}
 \nc{\bbk}{{\mathbb k}}
 \nc{\bbl}{{\mathbb l}}
 \nc{\bbm}{{\mathbb m}}
 \nc{\bbn}{{\mathbb n}}
 \nc{\bbo}{{\mathbb o}}
 \nc{\bbp}{{\mathbb p}}
 \nc{\bbq}{{\mathbb q}}
 \nc{\bbr}{{\mathbb r}}
 \nc{\bbs}{{\mathbb s}}
 \nc{\bbt}{{\mathbb t}}
 \nc{\bbu}{{\mathbb u}}
 \nc{\bbv}{{\mathbb v}}
 \nc{\bbw}{{\mathbb w}}
 \nc{\bbx}{{\mathbb x}}
 \nc{\bby}{{\mathbb y}}
 \nc{\bbz}{{\mathbb z}}
 \nc{\calA}{{\mathcal A}}
 \nc{\calB}{{\mathcal B}}
 \nc{\calC}{{\mathcal C}}
 \nc{\calD}{{\mathcal D}}
 \nc{\calE}{{\mathcal E}}
 \nc{\calF}{{\mathcal F}}
 \nc{\calG}{{\mathcal G}}
 \nc{\calH}{{\mathcal H}}
 \nc{\calI}{{\mathcal I}}
 \nc{\calJ}{{\mathcal J}}
 \nc{\tcalI}{{\tilde{\mathcal I}}}
 \nc{\tcalJ}{{\tilde{\mathcal J}}}
 \nc{\calK}{{\mathcal K}}
 \nc{\calL}{{\mathcal L}}
 \nc{\calM}{{\mathcal M}}
 \nc{\calN}{{\mathcal N}}
 \nc{\calO}{{\mathcal O}}
 \nc{\calP}{{\mathcal P}}
 \nc{\calQ}{{\mathcal Q}}
 \nc{\calR}{{\mathcal R}}
 \nc{\calS}{{\mathcal S}}
 \nc{\calT}{{\mathcal T}}
 \nc{\calU}{{\mathcal U}}
 \nc{\calV}{{\mathcal V}}
 \nc{\calW}{{\mathcal W}}
 \nc{\calX}{{\mathcal X}}
 \nc{\calY}{{\mathcal Y}}
 \nc{\calZ}{{\mathcal Z}}
  \nc{\cala}{{\mathcal a}}
 \nc{\calb}{{\mathcal b}}
 \nc{\calc}{{\mathcal c}}
 \nc{\cald}{{\mathcal d}}
 \nc{\cale}{{\mathcal e}}
 \nc{\calf}{{\mathcal f}}
 \nc{\calg}{{\mathcal g}}
 \nc{\calh}{{\mathcal h}}
 \nc{\cali}{{\mathcal i}}
 \nc{\calj}{{\mathcal j}}
 \nc{\calk}{{\mathcal k}}
 \nc{\call}{{\mathcal l}}
 \nc{\calm}{{\mathcal m}}
 \nc{\caln}{{\mathcal n}}
 \nc{\calo}{{\mathcal o}}
 \nc{\calp}{{\mathsf p}}
 \nc{\calq}{{\mathcal q}}
 \nc{\calr}{{\mathcal r}}
 \nc{\cals}{{\mathcal s}}
 \nc{\calt}{{\mathcal t}}
 \nc{\calu}{{\mathcal u}}
 \nc{\calv}{{\mathcal v}}
 \nc{\calw}{{\mathcal w}}
 \nc{\calx}{{\mathcal x}}
 \nc{\caly}{{\mathcal y}}
 \nc{\calz}{{\mathcal z}}
 \nc{\frakA}{{\mathfrak A}}
 \nc{\frakB}{{\mathfrak B}}
 \nc{\frakC}{{\mathfrak C}}
 \nc{\frakD}{{\mathfrak D}}
 \nc{\frakE}{{\mathfrak E}}
 \nc{\frakF}{{\mathfrak F}}
 \nc{\frakG}{{\mathfrak G}}
 \nc{\frakH}{{\mathfrak H}}
 \nc{\frakI}{{\mathfrak I}}
 \nc{\frakJ}{{\mathfrak J}}
 \nc{\frakK}{{\mathfrak K}}
 \nc{\frakL}{{\mathfrak L}}
 \nc{\frakM}{{\mathfrak M}}
 \nc{\frakN}{{\mathfrak N}}
 \nc{\frakO}{{\mathfrak O}}
 \nc{\frakP}{{\mathfrak P}}
 \nc{\frakQ}{{\mathfrak Q}}
 \nc{\frakR}{{\mathfrak R}}
 \nc{\frakS}{{\mathfrak S}}
 \nc{\frakT}{{\mathfrak T}}
 \nc{\frakU}{{\mathfrak U}}
 \nc{\frakV}{{\mathfrak V}}
 \nc{\frakW}{{\mathfrak W}}
 \nc{\frakX}{{\mathfrak X}}
 \nc{\frakY}{{\mathfrak Y}}
 \nc{\frakZ}{{\mathfrak Z}}
 \nc{\fraka}{{\mathfrak a}}
 \nc{\frakb}{{\mathfrak b}}
 \nc{\frakc}{{\mathfrak c}}
 \nc{\frakd}{{\mathfrak d}}
 \nc{\frake}{{\mathfrak e}}
 \nc{\frakf}{{\mathfrak f}}
 \nc{\frakg}{{\mathfrak g}}
 \nc{\frakh}{{\mathfrak h}}
 \nc{\fraki}{{\mathfrak i}}
 \nc{\frakj}{{\mathfrak j}}
 \nc{\frakk}{{\mathfrak k}}
 \nc{\frakl}{{\mathfrak l}}
 \nc{\frakm}{{\mathfrak m}}
 \nc{\frakn}{{\mathfrak n}}
 \nc{\frako}{{\mathfrak o}}
 \nc{\frakp}{{\mathfrak p}}
 \nc{\frakq}{{\mathfrak q}}
 \nc{\frakr}{{\mathfrak r}}
 \nc{\fraks}{{\mathfrak s}}
 \nc{\frakt}{{\mathfrak t}}
 \nc{\fraku}{{\mathfrak u}}
 \nc{\frakv}{{\mathfrak v}}
 \nc{\frakw}{{\mathfrak w}}
 \nc{\frakx}{{\mathfrak x}}
 \nc{\fraky}{{\mathfrak y}}
 \nc{\frakz}{{\mathfrak z}}
 \nc{\sha}{{\mbox{\cyr x}}}
\nc{\slfour}{{{\mathfrak{sl}}(4)}}
\nc{\slthree}{{{\mathfrak{sl}}(3)}}
\nc{\sltwo}{{{\mathfrak{sl}}(2)}}
 \nc{\sld}{{{\mathfrak{sl}}(d+1)}}
 \nc{\slr}{{{\mathfrak{sl}}(r+1)}}
 \nc{\slrr}{{{\mathfrak{sl}}(r+2)}}
\nc{\so}{{{\mathfrak {so}}(5)}}
\nc{\sofour}{{{\mathfrak {so}}(4)}}
\nc{\sothree}{{{\mathfrak {so}}(3)}}
\nc{\sosix}{{{\mathfrak {so}}(6)}}
\nc{\sotwo}{{{\mathfrak {so}}(2)}}
\nc{\spone}{{{\mathfrak {sp}}(1)}}
\nc{\sptwo}{{{\mathfrak {sp}}(2)}}
 \nc{\gtwo}{{\frg_2}}
\newcommand{\frg}{{\mathfrak g}}
 \nc{\MPV}{{\mathcal{MPV}}}
 \nc{\uds}{{\underline{s}}}
\nc{\va}{{\vec a}}
\nc{\vb}{{\vec b}}
\nc{\vc}{{\vec c}}
\nc{\vdta}{{\vec \delta}}
\nc{\ve}{{\vec e}}
\nc{\vm}{{\vec m}}
\nc{\vp}{{\vec p}}
\nc{\vn}{{\vec n}}
\nc{\vmu}{{\vec \mu}}
\nc{\vr}{{\vec r}}
\nc{\vs}{{\vec s}}
\nc{\vt}{{\vec t}}
\nc{\vu}{{\vec u}}
\nc{\vx}{{\vec x}}
\nc{\vC}{{\vec C}}
\nc{\vv}{{\bf v}}
\begin{document}

\title[Witten multiple zeta function of $G_2$ type]
{Multi-polylogs at twelfth roots of unity and\\
special values of Witten multiple zeta function \\
attached to the exceptional Lie algebra $\frg_2$}

\subjclass{Primary: 11M41; Secondary: 40B05}

\keywords{Witten multiple zeta function, multi-polylog.}

\maketitle
\begin{center}
\sc{Jianqiang Zhao}\\
\  \\
Department of Mathematics, Eckerd College, St. Petersburg, FL 33711, USA\\
Max-Planck Institut f\"ur Mathematik, Vivatsgasse 7, 53111 Bonn, Germany
\end{center}

\bigskip
\noindent{\small {\bf Abstract.}
In this note we shall study the Witten multiple zeta function
associated to the exceptional Lie algebra $\frg_2$. Our
main result shows that its special values at nonnegative integers
can always be expressed as rational linear combinations of
the multi-polylogs evaluated at 12th roots of unity, except for
two irregular cases.}

\vskip0.6cm

\section{Introduction and preliminaries}

In \cite{W} Witten related the volumes of the moduli spaces of
representations of the fundamental groups of two dimensional surfaces
to the special values of the following zeta function attached to
complex semisimple Lie algebras $\frg$ at positive integers:
$$\gz_W(s;\frg)=\sum_\gf \frac1{(\dim \gf)^s},$$
where $\gf$ runs over all finite dimensional irreducible representations of $\frg$.
Matsumoto and his collaborators recently defined the multiple variable
analogs of $\gz_W(s;\frg)$ and studied some of their analytical and
arithmetical properties (see \cite{KMT2,KMT3,Mats2}). Let
$\gD_+$ be the set of all positive roots of $\frg$ and
$\{\gl_1,\dots,\gl_d\}$ its fundamental weights.
Then one can define (see \cite[(1.6)]{KMT2}) the multiple
zeta function attached to $\frg$ by setting
\begin{equation}\label{equ:genDef}
\gz_\frg(\{s_\ga\}_{\ga\in \gD_+}):= \sum_{m_1,\dots,m_d=1}^\infty
 \prod_{\ga\in \gD_+}\langle\ga^\vee, m_1\gl_1+\dots+m_d\gl_d\rangle^{-s_\ga},
\end{equation}
where $\ga^\vee$ is the coroot of $\ga$. The relation to the
original (single variable) Witten zeta function $\gz_W(s;\frg)$ is
given by the identity ((1.4) of loc.\ cit.)
$$\gz_W(s;\frg)=\Big(\prod_{\ga\in \gD_+} \langle \ga^\vee, \gl_1+\dots+\gl_d\rangle\Big)^s \cdot \gz_\frg(s,\dots,s).$$

In \cite{Zso5} and \cite{Zgenzeta}
we considered the special values of the Witten multiple zeta function
attached to $\so$ and $\slfour$, respectively. The main results state that
the special values of $\gz_\slfour$ (resp.~$\gz_\so$) at nonnegative
integers can be expressed as $\Q$-linear combinations of multiple zeta
values~\eqref{equ:MZV} (resp.~alternating Euler sums, i.e.,
multi-polylogs~\eqref{equ:mpol} evaluated at $\pm 1$). For general
Lie algebras we expect other special values of multi-polylogs at
roots of unity should appear.

In this note we turn to the exceptional Lie algebra $\frg_2$. Let $\ga_1$ and
$\ga_2$ be the two fundamental roots of $\frg_2$. Then the set of the positive
roots is (see \cite[p.\ 220]{Bour})
$$\gD_+=\{\ga_1,\ga_2,\ga_1+\ga_2,\ga_1+2\ga_2,\ga_1+3\ga_2,2\ga_1+3\ga_2\}.$$
Thus by definition \eqref{equ:genDef}
\begin{equation}\label{equ:gtwoDef}
 \gz_\gtwo(s_1,\dots,s_6)=\sum_{m,n=1}^\infty
 \frac{1}{m^{s_1}n^{s_2} (m+n)^{s_3} (m+2n)^{s_4}(m+3n)^{s_5}(2m+3n)^{s_6}}.
\end{equation}

To study the special values of $\gz_\gtwo$ at nonnegative integers we need
to utilize the multi-polylog function defined by
\begin{equation}\label{equ:mpol}
Li_{s_1,\dots,s_d}(x_1,\dots,x_d):=\sum_{k_1>k_2>\dots>k_d>0}
\frac{x_1^{k_1}x_2^{k_2} \dots x_d^{k_d}}{k_1^{s_1}k_2^{s_2}\cdots
k_d^{s_d}},
\end{equation}
where  $|x_1\cdots x_j|<1$ and $s_j\in \N$ for all $j$.
We call $w:=s_1+\cdots+s_d$ the \emph{weight} and $d$ the \emph{depth}.
The multi-polylog function can be
continued meromorphically to a multi-valued function on $\C^d$ (see
\cite{Zanampol}). In particular we can consider its values at roots of
unity. When $x_j=1$ for all $j$ we obtain the well-known
multiple zeta values
\begin{equation}\label{equ:MZV}
\zeta(s_1,\dots, s_d):= \sum_{k_1>\dots>k_d>0}
 \frac{1}{k_1^{s_1}\cdots k_d^{s_d}}.
\end{equation}
If all $x_j$ are $N$-th root of unity for some positive integer $N$
then we say the special value given by \eqref{equ:mpol} has \emph{level} $N$.
In this case we have
\begin{equation}\label{equ:MPOLconvCondition}
  Li_{s_1,\dots,s_d}(x_1,\dots,x_d) \text{ converges }\ \Longleftrightarrow
  \ (s_1,x_1)\ne (1,1)
\end{equation}

Let $\MPV(w,d,N)$ be the $\Q$-vector space generated by the
the multi-polylog values of weight $w$, depth $d$ and level $N$.
The main result of this note is the following
\begin{thm} \label{thm:main}
Let $s_1,\dots,s_6$ be nonnegative integers.
Let $w=s_1+\cdots+s_6$. Then $\gz_\gtwo(s_1,\dots,s_6)$
converges if and only if
\begin{equation}\label{equ:domainOfConv}
w-s_1>1,
w-s_2>1, \text{ and } \ w>2.
\end{equation}
Assume \eqref{equ:domainOfConv} holds and $(s_1,\dots,s_6)\ne
(0,0,0,0,s_5,0), (0,0,0,0,0,s_6)$. Then
$\gz_\gtwo(s_1,\dots,s_6)\in \MPV(w,d\le 2,12)$,
except for the two cases
$(s_1,\dots,s_6)=(0,0,s_3,0,0,0), (0,0,0,s_4,0,0)$
when $\gz_\gtwo(s_1,\dots,s_6)\in \langle\gz(w-1),\gz(w)\rangle_\Q$.
\end{thm}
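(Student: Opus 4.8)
\medskip\noindent\emph{Proof strategy.} Write $L_1=m,\ L_2=n,\ L_3=m+n,\ L_4=m+2n,\ L_5=m+3n,\ L_6=2m+3n$, so that $\gz_\gtwo(\bfs)=\sum_{m,n\ge 1}\prod_{i=1}^{6}L_i(m,n)^{-s_i}$. For the convergence claim I would compare the series with $\iint_{x,y\ge 1}\prod_i L_i(x,y)^{-s_i}\,dx\,dy$ (legitimate since each $L_i>0$ on the quadrant) and pass to coordinates $x=\rho,\ y=\rho t$: the integrand becomes $C(t)\rho^{-w}$ with $C(t)=t^{-s_2}(1+t)^{-s_3}(1+2t)^{-s_4}(1+3t)^{-s_5}(2+3t)^{-s_6}$ over the region $\rho\ge\max(1,1/t)$. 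The $\rho$--integration converges iff $w>2$; the remaining $t$--integration converges at $t\to\infty$ iff $w-s_1>1$ and at $t\to 0$ iff $w-s_2>1$. These three conditions, which are visibly also sufficient, are exactly \eqref{equ:domainOfConv}.

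The evaluation proceeds by a two--step reduction. First, the relations $L_3=L_1+L_2$, $L_4=L_2+L_3$, $L_5=L_2+L_4$, $L_6=L_1+L_5=L_3+L_4$ (and the other relations among the $L_i$ with small integer coefficients) let me apply the standard partial--fraction identity $X^{-a}Y^{-b}=\sum_k\binom{b+k-1}{k}(X+Y)^{-b-k}X^{-a+k}+\sum_k\binom{a+k-1}{k}(X+Y)^{-a-k}Y^{-b+k}$ repeatedly; iterating it collapses $\gz_\gtwo(\bfs)$, whenever at least two of the six factors occur, into a finite $\Q$--linear combination of double series $Z_{ij}(a,b):=\sum_{m,n\ge 1}L_i^{-a}L_j^{-b}$ with $a,b\ge 1$, $a+b=w$, in only two of the forms; weight is preserved and the depth stays $\le 2$.

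Second, I would evaluate each $Z_{ij}$. If $\{L_i,L_j\}$ contains a bare variable ($L_1$ or $L_2$) in which the other form has coefficient $1$, then $p=L_i,\ q=L_j$ turns $Z_{ij}(a,b)$ into $\sum_{q>p\ge 1,\ q\equiv p\,(k)}p^{-a}q^{-b}$ for the appropriate $k\in\{1,2,3\}$, which upon expanding the congruence indicator becomes a $\Q$--combination of level--$k$ double polylogs. For the remaining ``entangled'' pairs the map $p=L_i,\ q=L_j$ has Jacobian $\pm 1$ or $\pm 3$ and carries the quadrant onto a cone of the shape $\{p\ge 1,\ q>\kappa p\}$ or $\{p\ge 1,\ \kappa_1 p<q<\kappa_2 p\}$ (the $\kappa$'s small rationals), possibly subject to a congruence $\alpha q\equiv\beta p\,(3)$; telescoping a finite tail reduces such a cone sum to $\gz(b,a)$ plus sums $\sum_{p\ge 1}p^{-a}\sum_{q\ge\mu p}q^{-b}$, and splitting the inner index into residue classes modulo $2$ and modulo $3$ — together with one further mod--$2$ split for the shapes coming from $L_6=2m+3n$ — turns everything into a $\Q$--combination of multiple zeta values and of double and single polylogs at roots of unity of order dividing $\operatorname{lcm}(2,3,4)=12$; hence $Z_{ij}(a,b)\in\MPV(w,d\le 2,12)$, and so is $\gz_\gtwo(\bfs)$. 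The cases with at most one nonzero $s_i$ must be done by hand: $\gz_\gtwo(s_1,0,\dots,0)$ and $\gz_\gtwo(0,s_2,0,\dots,0)$ diverge; $\gz_\gtwo(0,0,s_3,0,0,0)=\sum_{m,n}(m+n)^{-s_3}=\gz(w-1)-\gz(w)$ and $\gz_\gtwo(0,0,0,s_4,0,0)=\sum_{m,n}(m+2n)^{-s_4}$ lie in $\langle\gz(w-1),\gz(w)\rangle_\Q$, the weight--$(w-1)$ term being why they are genuinely exceptional; and $\gz_\gtwo(0,0,0,0,s_5,0)$, $\gz_\gtwo(0,0,0,0,0,s_6)$ yield an isolated value of the Dirichlet $L$--function $L(\cdot,\chi_3)$ whose natural expansion into cube--root polylogs has coefficients in $\Q(\sqrt{-3})\setminus\Q$, which is why these two are excluded.

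The step I expect to be hardest is showing that the $Z_{ij}(a,b)$ really land in the \emph{$\Q$--vector space} $\MPV(w,d\le 2,12)$ and not merely in its extension by $\sqrt{-3}$ (or $\sqrt{-1}$). Splitting a single index into a residue class $r\not\equiv 0$ modulo $3$ (or $4$) introduces polylogs with irrational coefficients, so each entangled $Z_{ij}$ a priori threatens to contribute spurious $L(\cdot,\chi_3)$--type terms. One must organize the whole computation so that every congruence that ever occurs is ``homogeneous'', of the form $\alpha q+\beta p\equiv 0$ in the two summation variables — whose indicator has rational coefficients — and verify that the nonzero--residue contributions of any secondary split always come in balanced pairs, so that only complements of homogeneous conditions survive. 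This balancing is available in the generic case and fails in exactly the two excluded configurations; carrying out this bookkeeping over the finitely many shapes of $Z_{ij}$ that arise, and checking that the modulus never climbs above $12$, is the technical core of the argument.
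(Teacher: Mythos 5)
Your overall strategy --- iterated applications of the partial-fraction identity (the paper's Lemma \ref{lem:combLem}) down to depth-two lattice sums, followed by roots-of-unity splitting --- is the same as the paper's, but the two places where your sketch is vague are exactly where the paper's proof has its actual content, so as it stands there is a genuine gap. First, the claim that iteration ``collapses'' $\gz_\gtwo(\bfs)$ into a finite $\Q$-combination of \emph{convergent} two-form sums $Z_{ij}(a,b)$ is not automatic. After the paper's first reductions one faces three-form sums $A_{a,b}(s_1,s_2,s_3)$ as in \eqref{equ:A}, where the two entangled forms differ by $m$; eliminating them against each other produces individual terms in which the surviving two-variable form has exponent $1$, and those double series diverge (the inner $n$-sum is logarithmic), even though their combination converges. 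The paper's treatment of precisely this point --- grouping the exponent-one terms into the telescoping difference $C_{a,b}$ and the partial-sum limit $S_a^{(M)}$, with the $\log M/M\to 0$ estimate producing the $Li_{s,1}(\mu_a^k,1)+Li_{s+1}(\mu_a^k)$ contributions --- has no counterpart in your proposal. You could instead eliminate $m$ via $m+(am+bn)=(a+1)m+bn$ and avoid divergence, but then you are left with your ``entangled'' cone sums, which brings in the second and more serious issue.

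Second --- and you concede this yourself --- the theorem asserts membership in the $\Q$-vector space $\MPV(w,d\le 2,12)$, and your cone-sum evaluation generates boundary corrections (strips where $\alpha p-\beta q$ takes finitely many positive values, or diagonal terms with $q$ a fixed affine function of $p$) that force \emph{inhomogeneous} congruence conditions on a single variable; their root-of-unity indicators have irrational coefficients, and the resulting single sums (such as $\sum_{k\ge1}1/(3k-2)^s$, which the paper analyzes when explaining why $(0,0,0,0,s_5,0)$ and $(0,0,0,0,0,s_6)$ must be excluded) are believed \emph{not} to lie in $\MPV(w,d\le2,12)$. Asserting that such contributions ``always come in balanced pairs'' in the regular case is not a bookkeeping detail to be deferred: it is the theorem. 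The paper never needs any such cancellation, because its reduction path is engineered so that the only arithmetic conditions ever introduced are full divisibility conditions $a\mid M$, $b\mid N$ on single summation variables (see the computations of $B_{a,b}$, $C_{a,b}$ and $S_a$), whose indicators carry rational coefficients $1/a$, $1/b$; rationality of all coefficients is then visible term by term, and the level $12$ comes out as ${\rm lcm}(a,a+1,b)$ for the eight sums $A_{1,2},A_{2,6},A_{1,1},A_{2,1},A_{2,3},A_{3,6},A_{2,2},A_{3,2}$. Until you either prove your balancing claim or redesign the reduction along these lines, the proposal is a plausible plan rather than a proof. (Your convergence criterion and your disposal of the one-form cases, including $(0,0,s_3,0,0,0)$ and $(0,0,0,s_4,0,0)$ landing in $\langle\gz(w-1),\gz(w)\rangle_\Q$, are fine and agree with the paper, which handles them by citing \cite{Zgenzeta} and \cite{Zso5}.)
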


We are going to need the following combinatorial lemma
to prove Theorem \ref{thm:main}.
\begin{lem} \label{lem:combLem}
Let $s,t$ be two positive integers. Let $x$ and $y$
be two non-zero real numbers such that $x+y\ne 0$. Then
$$ \frac{1}{x^sy^t} =
\sum_{a=0}^{s-1}  {t+a-1\choose a}\frac{1}{x^{s-a}(x+y)^{t+a}}
+\sum_{b=0}^{t-1} {s+b-1\choose b}\frac{1}{y^{t-b}(x+y)^{s+b}}.$$
\end{lem}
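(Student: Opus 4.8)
The heart of the matter is elementary partial fractions: I would derive the identity by iterating the trivial two-term decomposition
\[
\frac{1}{x^sy^t}=\frac{1}{(x+y)\,x^{s-1}y^t}+\frac{1}{(x+y)\,x^sy^{t-1}}\qquad(s,t\ge1),
\]
which, after multiplying through by $(x+y)x^sy^t$, is just the tautology $x+y=x+y$ and hence holds as soon as $x$, $y$ and $x+y$ are all nonzero. Writing $z=x+y$, I would apply this repeatedly to every summand of the form $\frac{1}{z^jx^ay^b}$ that still has $a\ge1$ and $b\ge1$. Each application raises the exponent of $z$ by one and lowers the total degree in $x$ and $y$ by one, so after finitely many steps no further decomposition is possible and every surviving summand has the shape $\frac{1}{x^iz^j}$ with $i\ge1$ or $\frac{1}{y^iz^j}$ with $i\ge1$. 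Since every step is an exact equality, $\frac{1}{x^sy^t}$ equals the sum of these terminal monomials.

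The remaining task is to count, with multiplicity, how often each terminal monomial occurs. Recording a summand $\frac{1}{z^jx^ay^b}$ by the lattice point $(a,b)$ — the process starts at $(s,t)$ — a decomposition step moves from $(a,b)$ to $(a-1,b)$ or to $(a,b-1)$, and we stop the moment we first land on a coordinate axis. The terminal monomial $\frac{1}{x^{s-a}z^{t+a}}$, for $0\le a\le s-1$ (so the exponent of $x$ is $s-a\ge1$ and that of $y$ is $0$), is produced by each monotone path from $(s,t)$ to $(s-a,0)$ that meets the $x$-axis only at its endpoint; such a path has $a$ left-steps and $t$ down-steps, hence length $a+t$, matching the exponent of $z$, and its final step must be a down-step — otherwise the path would already have touched the $x$-axis one step earlier. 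Thus the first $a+t-1$ steps consist of $a$ left-steps and $t-1$ down-steps in an arbitrary order, giving $\binom{a+t-1}{a}=\binom{t+a-1}{a}$ such paths; by the symmetric count $\frac{1}{y^{t-b}z^{s+b}}$ occurs $\binom{s+b-1}{b}$ times for $0\le b\le t-1$. Summing over all terminal monomials produces exactly the right-hand side of the Lemma.

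The only genuinely delicate point is that this terminal multiset is independent of the order in which the decompositions are carried out; that becomes clear once one observes it is indexed precisely by the monotone lattice paths from $(s,t)$ that stop on first hitting an axis, which is also what licenses the counting above. If one prefers to avoid this remark, the same computation can be packaged as an induction on $s+t$: apply the two-term identity once, invoke the inductive hypothesis on the two children, and check that the binomial coefficients recombine via Pascal's rule $\binom{m-1}{k-1}+\binom{m-1}{k}=\binom{m}{k}$. (A third route expands $x^{-s}=(z-y)^{-s}$ and $y^{-t}=(z-x)^{-t}$ as binomial series in a region where they converge, matches the two resulting Laurent expansions, and reduces the identity to a Vandermonde convolution; I would not pursue this one, as it is the most computational of the three.) I expect the lattice-path argument to be the cleanest to write out in full, with the bookkeeping of the terminal multiset being the only thing that needs a little care.
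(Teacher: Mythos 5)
Your proof is correct, but it is a genuinely different route from the paper's: the paper does not prove the identity at all, it simply quotes it as the $r=2$ case of a more general partial-fraction lemma of Zhou and Bradley (Lemma 1 of \cite{ZB}) on Mordell--Tornheim sums. What you do instead is give a self-contained elementary derivation: iterate the one-step decomposition $\frac{1}{x^sy^t}=\frac{1}{(x+y)x^{s-1}y^t}+\frac{1}{(x+y)x^sy^{t-1}}$ and count the terminal terms by monotone lattice paths from $(s,t)$ that stop on first hitting an axis, which correctly yields the coefficients $\binom{t+a-1}{a}$ and $\binom{s+b-1}{b}$ (the forced final step onto the axis is exactly the point that makes the count $\binom{t+a-1}{a}$ rather than $\binom{t+a}{a}$, and you handle it). Your worry about order-independence of the terminal multiset is legitimate but mild --- each term's full expansion is determined by its own branching, so the multiset is path-indexed regardless of the order of expansion --- and your fallback induction on $s+t$ via Pascal's rule is the cleanest way to dispose of it in writing. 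In short, your argument buys a complete, verifiable proof inside the note (and in fact reproves the cited lemma in its two-variable case), while the paper's citation buys brevity and situates the identity within the general $r$-variable decomposition used for Mordell--Tornheim sums; either is acceptable, and nothing in your argument fails.
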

\begin{proof}
Take $r=2$ in \cite[Lemma 1]{ZB}.
\end{proof}

\section{Proof of Theorem \ref{thm:main}}
Before starting the proof we want to point out why it is necessary
to exclude the two cases $(s_1,\dots,s_6)=
(0,0,0,0,s_5,0), (0,0,0,0,0,s_6)$. Clearly
\begin{align*}
\gz_\gtwo(0,0,0,0,s,0)=&\sum_{m,n}^\infty \left(
    \frac{1}{(3(m+n))^s}+\frac{1}{(3(m+n)-1)^s} +\frac{1}{(3(m+n)-2)^s}\right)\\
=&\sum_{k=1}^\infty \left(
    \frac{k-1}{(3k)^s}+\frac{k-1}{(3k-1)^s} +\frac{k-1}{(3k-2)^s}\right)\\
=&\frac13\left(\gz(s-1)-\sum_{k=1}^\infty
    \frac{3}{(3k)^s}+\frac{2}{(3k-1)^s} +\frac{1}{(3k-2)^s}\right) \\
=&\frac13\left(\gz(s-1)-2\gz(s)-\sum_{k=1}^\infty
    \frac{1}{(3k)^s}+\sum_{k=1}^\infty \frac{1}{(3k-2)^s}\right) \\
=&\frac13\gz(s-1)-\frac23\gz(s)-\frac1{3^{s+1}}\gz(s)-\frac13\ga(s)
\end{align*}
where  $\ga(s)=\sum_{k=1}^\infty1/(3k-2)^s$. Fix a third root of
unity $\nu=\exp(2\pi i/3)$. Then
\begin{equation*}
 \ga(s)=  \frac13\sum_{k=1}^\infty \frac{1+\nu^{k+2}+\nu^{2k+4}}{k^s}
 = \frac13 \Big(\gz(s)+\nu^2 Li_s(\nu)+\nu Li_s(\nu^2) \Big)
\end{equation*}
By numerical evidence we believe
$\nu^2 Li_s(\nu)+\nu Li_s(\nu^2)\notin \MPV(s,d\le 2,12)$
for all positive integer $s$. For $s=1$ we have
$$\nu^2 Li_1(\nu)+\nu Li_1(\nu^2)=\frac{\sqrt{3}\pi}6+\frac{\log 3}4.$$
Now the real values in  $\MPV(1,d\le 2,12)$ are generated by
$\log(2\cos (k\pi/12))$, $k=0,\dots,5$, i.e., generated by
$$\log 2,\ \log 3,\text{ and }\ \log (\sqrt{3}+1),$$
since $\cos(\pi/12)=(\sqrt{6}+\sqrt{2})/4$ and $\cos(5\pi/12)=(\sqrt{6}-\sqrt{2})/4=\cos(\pi/12)/4.$
Similar analysis can be applied to $\gz_\gtwo(0,0,0,0,0,s_6)$.

\medskip
Let's turn to the proof of Theorem \ref{thm:main}.
The domain of convergence \eqref{equ:domainOfConv}
follows from the proof of \cite[Prop.\ 2.1]{Zgenzeta}.
We now use step-by-step reductions to prove the
rest of the theorem. First notice that if
$(s_1,\dots,s_6)=(0,0,s_3,0,0,0), (0,0,0,s_4,0,0)$
then $\gz_\gtwo(s_1,\dots,s_6)=\gz_\so(s_1,\dots,s_4)$
and the claim of the theorem is proved in \cite[Case (i) and (iii.c)]{Zso5}.
So in what follows we may assume that $(s_1,\dots,s_6)$ is \emph{regular}, i.e.,
$(s_1,\dots,s_6)$ satisfies \eqref{equ:domainOfConv} and
$$(s_1,\dots,s_6)\ne (0,0,s_3,0,0,0), (0,0,0,s_4,0,0), (0,0,0,0,s_5,0),
(0,0,0,0,0,s_6).$$

\subsection{Reduction to eight cases of three nonzero variables by Lemma \ref{lem:combLem}}
\subsubsection{General case to five nonzero variables}
Taking $x=m$ and $y=n$ in Lemma \ref{lem:combLem} we see that
\begin{align}
 \gz_\gtwo(s_1,\dots,s_6)=&
\sum_{a_1=0}^{s_1-1} {s_2+a_1-1\choose a_1}
\gz_\gtwo(s_1-a_1,0,s_3+s_2+a_1,s_4,s_5,s_6) \label{equ:1}\\
+ &\sum_{a_2=0}^{s_2-1} {s_1+a_2-1\choose a_2}
\gz_\gtwo(0,s_2-a_2,s_3+s_1+a_2,s_4,s_5,s_6). \label{equ:2}
\end{align}
\subsubsection{Five nonzero variables to four nonzero variables}
With $(x,y)=(m+3n,2m+3n)$ in Lemma \ref{lem:combLem}
(since $x+y=3(m+2n)$) we may reduce \eqref{equ:1} to
following two types of special values
\begin{align} \label{equ:11}
 \gz_\gtwo(s_1,0,s_3,s_4,s_5,0)=&\sum_{m,n=1}^\infty
 \frac{1}{m^{s_1} (m+n)^{s_3} (m+2n)^{s_4}(m+3n)^{s_5} },\\
 \label{equ:12}
\gz_\gtwo(s_1,0,s_3,s_4,0,s_6)=&\sum_{m,n=1}^\infty
 \frac{1}{m^{s_1} (m+n)^{s_3} (m+2n)^{s_4}(2m+3n)^{s_6}},
\end{align}
and reduce  \eqref{equ:2} to
\begin{align} \label{equ:21}
 \gz_\gtwo(0,s_2,s_3,s_4,s_5,0)=&\sum_{m,n=1}^\infty
 \frac{1}{n^{s_2} (m+n)^{s_3} (m+2n)^{s_4}(m+3n)^{s_5} },\\
 \label{equ:22}
\gz_\gtwo(0,s_2,s_3,s_4,0,s_6)=&\sum_{m,n=1}^\infty
 \frac{1}{n^{s_2} (m+n)^{s_3} (m+2n)^{s_4}(2m+3n)^{s_6}},
\end{align}
Indeed, we have
\begin{align*}
 \gz_\gtwo(s_1,0,s_3,\dots,s_6)=&
\sum_{a_5=0}^{s_5-1} {s_6+a_5-1\choose a_5}\frac1{3^{s_6+a_5}}
\gz_\gtwo(s_1,0,s_3,s_4+s_6+a_5,s_5-a_5,0)  \\
+ &\sum_{a_6=0}^{s_6-1} {s_5+a_6-1\choose a_6}\frac1{3^{s_5+a_6}}
\gz_\gtwo(s_1,0,s_3,s_4+s_5+a_6,0,s_6-a_6)
\end{align*}
and similarly
\begin{align*}
 \gz_\gtwo(0,s_2,s_3,\dots,s_6)=&
\sum_{a_5=0}^{s_5-1} {s_6+a_5-1\choose a_5}\frac1{3^{s_6+a_5}}
\gz_\gtwo(0,s_2,s_3,s_4+s_6+a_5,s_5-a_5,0)  \\
+ &  \sum_{a_6=0}^{s_6-1} {s_5+a_6-1\choose a_6}\frac1{3^{s_5+a_6}}
\gz_\gtwo(0,s_2,s_3,s_4+s_5+a_6,0,s_6-a_6).
\end{align*}

\subsubsection{Four nonzero variables to three nonzero variables}
For any positive integers $a$, $b$, $s_1,s_2,s_3$ of which at least
two of $s_j$'s are nonzero such that $s_2+s_3>1$
and $s_1+s_2+s_3>2$ we define
\begin{equation}\label{equ:A}
 A_{a,b}(s_1,s_2,s_3):=\sum_{m,n=1}^\infty
 \frac{1}{m^{s_1} ((a+1)m+bn)^{s_2} (am+bn)^{s_3}}.
\end{equation}
We provide the details for reduction of \eqref{equ:11} and leave
that of \eqref{equ:12}--\eqref{equ:22} to the interested reader.
Applying Lemma \ref{lem:combLem} with $(x,y)=(m+n,m+3n)$ to \eqref{equ:11}
we can reduce it as follows:
\begin{align}
 \gz_\gtwo(s_1,0,s_3,s_4,s_5,0)=&
\sum_{a_3=0}^{s_3-1}{s_5+a_3-1\choose a_3} \frac1{2^{s_5+a_3}}
\gz_\gtwo(s_1,0,s_3-a_3,s_4+s_5+a_3,0,0)\notag \\
+ &\sum_{a_5=0}^{s_5-1}{s_3+a_5-1\choose a_5}\frac1{2^{s_3+a_5}}
\gz_\gtwo(s_1,0,0,s_4+s_3+a_5,s_5-a_5,0) \notag \\
=&\sum_{a_3=0}^{s_3-1} {s_5+a_3-1\choose a_3}
  2^{s_3-s_5-2a_3}A_{1,2}(s_1,s_3-a_3,s_4+s_5+a_3)
 \label{equ:111} \\
+ & \sum_{a_5=0}^{s_5-1} {s_3+a_5-1\choose a_5}
 3^{s_4+s_3+a_5}2^{s_5-s_3-2a_5}A_{2,6}(s_1,s_4+s_3+a_5,s_5-a_5) .\label{equ:112}
\end{align}
Applying Lemma \ref{lem:combLem} with $(x,y)=(m+n,m+3n)$ to \eqref{equ:21}
we get
\begin{align}
 \gz_\gtwo(0,s_2,s_3,s_4,s_5,0)= &
 \sum_{a_3=0}^{s_3-1} {s_5+a_3-1\choose a_3}
 \frac1{2^{s_5+a_3}} A_{1,1}(s_2,s_4+s_5+a_3,s_3-a_3)  \label{equ:211} \\
+ &\sum_{a_5=0}^{s_5-1} {s_3+a_5-1\choose a_5}
 \frac1{2^{s_3+a_5}} A_{2,1}(s_2,s_5-a_5,s_4+s_3+a_5).\label{equ:212}
\end{align}
We may apply Lemma \ref{lem:combLem} with $(x,y)=(m+n,m+2n)$
to \eqref{equ:12} to get
\begin{align}
\gz_\gtwo(s_1,0,s_3,s_4,0,s_6)=&\sum_{a_3=0}^{s_3-1} {s_4+a_3-1\choose a_3}
  3^{s_3-a_3} A_{2,3}(s_1,s_3-a_3,s_6+s_4+a_3)
 \label{equ:121} \\
+ \sum_{a_4=0}^{s_4-1} & {s_3+a_4-1\choose a_4}
 3^{s_4-a_4} 2^{s_6+s_3+a_4} A_{3,6}(s_1,s_6+s_3+a_4,s_4-a_4) .\label{equ:122}
\end{align}
and to \eqref{equ:22} to get
\begin{align}
\gz_\gtwo(0,s_2,s_3,s_4,0,s_6)=&\sum_{a_3=0}^{s_3-1} {s_4+a_3-1\choose a_3}
  2^{s_3-a_3} A_{2,2}(s_2,s_6+s_4+a_3,s_3-a_3)
 \label{equ:221} \\
+ & \sum_{a_4=0}^{s_4-1} {s_3+a_4-1\choose a_4}
 2^{s_4-a_4}  A_{3,2}(s_2,s_4-a_4,s_6+s_3+a_4) .\label{equ:222}
\end{align}

To summarize we have reduced the general special values of
$\gz_\gtwo$ to the eight cases given by \eqref{equ:111}--\eqref{equ:222}.
Moreover, if we start with a regular value then after the above reduction
steps we still get regular values.

\subsection{Computation of $A_{a,b}(s_1,s_2,s_3)$}
Throughout this subsection for any positive integer $N$
let $\mu_N$ be a fixed primitive $N$-th root of unity.
If we na\"ively apply Lemma \ref{lem:combLem} with $x=(a+1)m+bn$ and
$y=-am-bn$ to \eqref{equ:A} then we would get divergent series
because of the convergence condition
\eqref{equ:MPOLconvCondition} of the multi-polylogs.
Thus we need to modify it by calculating $A_{a,b}(s_1,s_2,s_3)$ as follows:
\begin{align*}
A_{a,b}(s_1,s_2,s_3)=&\sum_{m,n=1}^\infty
 \frac{(-1)^{s_3}}{m^{s_1}((a+1)m+bn)^{s_2} (-am-bn)^{s_3}} \notag \\
  =&\sum_{a_3=0}^{s_3-2} {s_2+a_3-1\choose a_3}(-1)^{a_3}
 B_{a,b}(s_1+s_2+a_3,s_3-a_3)\notag \\
 +&\sum_{a_2=0}^{s_2-2} {s_3+a_2-1\choose a_2}
 (-1)^{s_3}B_{a+1,b}(s_1+s_3+a_2,s_2-a_2)\notag \\
+&  \frac{(-1)^{s_3}}{b} {s_2+s_3-2\choose s_3-1}
\Big(C_{a,b}(s_1+s_2+s_3-1)+\lim_{M\to\infty} S_a^{(M)}(s_1+s_2+s_3-1)\Big)
\end{align*}
where the functions $B$, $C$ and $S$ are defined as follows.
For any positive integers  $a$, $b$, $r$ and $s$ (with $s>1$)
\begin{multline*}
B_{a,b}(r,s):= \sum_{m,n=1}^\infty \frac{1}{m^r (am+bn)^s}
 =\frac{a^{r-1}}{b}  \sum_{j=0}^{b-1}  \sum_{k=0}^{a-1} \sum_{m,n=1}^\infty
 \frac{1}{m^r} \frac{\mu_a^{km}\mu_b^{jn}}{(m+n)^s} \\
 =\frac{a^{r-1}}{b}  \sum_{j=0}^{b-1}  \sum_{k=0}^{a-1}
 Li_{s,r}(\mu_b^j, \mu_a^k \mu_b^{-j})
 \in \MPV\big(r+s,2,{\rm lcm}(a,b)\big) .
\end{multline*}
where lcm denotes the lowest common multiple.
For any positive integers $a$, $b$ and $s$ (with $s>1$)
\begin{align*}
C_{a,b}(s):=&  \sum_{j=1}^{b-1} \sum_{m,n=1}^\infty
 \frac{1}{m^s} \left(\frac{\mu_b^{jn}}{(a+1)m+n}-\frac{\mu_b^{jn}}{am+n}\right)\\
 =& \sum_{j=1}^{b-1} \left((a+1)^{s-1}\sum_{k=0}^a
 Li_{1,s}(\mu_b^j, \mu_{a+1}^k \mu_b^{-j})-a^{s-1}\sum_{k=1}^a
 Li_{1,s}(\mu_b^j, \mu_a^k \mu_b^{-j})\right)\\
 \in & \MPV\big(s+1,2,{\rm lcm}(a,a+1,b)\big).
\end{align*}
For any positive integers $a$ and $s$ (with $s>1$)
\begin{align*}
 S_a^{(M)}(s):=&\sum_{m,n=1}^M \frac{1}{m^s} \left(\frac1{(a+1)m+n}-\frac1{am+n}\right)\\
=& \sum_{m,n=1}^M \frac{1}{m^s} \left(\frac{1}{(a+1)m+n}-\frac{1}{am+n}\right)\\
  =&\sum_{m=1}^M
\frac{1}{m^s} \left(  \sum_{n=1+(a+1)m}^{M+(a+1)m}-\sum_{n=1+am}^{M+am}\right)
     \frac{1}{n} \\
=&\sum_{m=1}^{M}\frac{1}{m^s} \left( \sum_{n=M+am+1}^{M+(a+1)m}
+\sum_{n=1}^{am}-\sum_{n=1}^{(a+1)m} \right) \frac{1}{n} .
\end{align*}
Noticing that $s>1$ and therefore
\begin{equation*}
 \sum_{m=1}^M\frac{1}{m^s} \sum_{n=M+am+1}^{M+(a+1)m} \frac{1}{n}
<\sum_{m=1}^M \frac{1}{m^{s-1} M}\ll \frac{\log M}{M}\to 0
\quad\text{ as } \quad M\to \infty,
\end{equation*}
we quickly see that
\begin{multline*}
S_a(s):=\lim_{M\to\infty}S_a^{(M)}(s)
 =a^{s-1}\sum_{k=1}^a \sum_{m\ge n\ge 1}\frac{\mu_a^{km}}{m^s n}
 -(a+1)^{s-1}\sum_{k=1}^{a+1} \sum_{ m\ge n\ge 1}
\frac{\mu_{a+1}^{km}}{m^s n}\\
=a^{s-1}\sum_{k=1}^a \Big(Li_{s,1}(\mu_a^k,1)
+Li_{s+1}(\mu_a^k)\Big)
-(a+1)^{s-1}\sum_{k=1}^{a+1}  \Big(Li_{s,1}(\mu_{a+1}^k,1)+Li_{s+1}(\mu_{a+1}^k)\Big).
\end{multline*}
By the expressions for $B_{a,b}(r,s)$,
$C_{a,b}(s)$ and $S_a(s)$ we clearly have
$$ A_{a,b}(s_1,s_2,s_3)\in \MPV\big(s_1+s_2+s_3,d\le 2,{\rm lcm}(a,a+1,b)\big).$$
Therefore Theorem \ref{thm:main} follows from
\eqref{equ:111}--\eqref{equ:222} immediately.
This concludes the proof of Theorem~\ref{thm:main}.

\section{Some Numerical Examples}
We remark that the twelfth root of unity $\mu_{12}$
appears essentially in the computation of
\eqref{equ:122}. For example we have
\begin{align*}
\gz_\gtwo(1,0,0,1,0,1)=&\sum_{m,n=1}^\infty\frac{1}{m(m+2n)(2m+3n)}
     =-C_{3,6}(2)-S_3(2)\Big)\\
     =&\sum_{j=1}^5 \left(3\sum_{k=1}^3
 Li_{1,2}(\mu_6^j, \mu_3^k \mu_6^{-j})-4\sum_{k=0}^3
 Li_{1,2}(\mu_6^j, \mu_{4}^k \mu_6^{-j})\right)\\
+&4\sum_{k=1}^{4}  \Big(Li_{2,1}(\mu_{4}^k,1)+Li_3(\mu_{4}^k)\Big)
-3\sum_{k=1}^3 \Big(Li_{2,1}(\mu_3^k,1)+Li_3(\mu_3^k)\Big)\\
=&
0.29118204074051670279981404910049215137872186455887\backslash\\
&\phantom{0.}44495517845170396367225951404279842529727941799295
\end{align*}
with the error bounded by $10^{-100}$ using the computer program
GiNac developed by Vollinga and Weinzierl \cite{GiNac}.
With our current state of knowledge it is very difficult to
prove rigorously that this value does not lie in
$\MPV\big(3,d\le 2,N\big)$ for any proper divisor $N$ of 12.
However, by assuming Grothendieck's period conjecture it might be
possible to prove it by finding explicitly the bases
of $\MPV\big(3,d\le 2,N\big)$ using the ideas of \cite{Zocta,Zpolrel}.
To illustrate this let us fix a third root of
unity $\nu=\exp(2\pi i/3)$ as before and consider
\begin{align*}
 \gz_\gtwo(1,0,0,0,2,0)=\sum_{m,n=1}^\infty\frac{1}{m(m+3n)^2}
     =&\frac13\sum_{m,n=1}^\infty\frac{1+\nu^{n}+\nu^{2n}}{m(m+n)^2}\\
     =&\frac13\Big(\gz(2,1)+Li_{2,1}(\nu,\nu^2)+Li_{2,1}(\nu^2,\nu)\Big)
\end{align*}
which lies in the $\Q$-vector space $\MPV(3,3)$ generated by the special
values of multi-polylog of weight three and level three.
Assuming a variant of Grothendieck's period conjecture, Deligne \cite{Del}
constructs explicitly a set of basis for $\MPV(w,N)$ and in particular
predicts that $\dim_\Q\MPV(3,3)=8$.  In \cite{Zpolrel}
by using (regularized) double shuffle relations (RDS) we can explicitly
construct the following basis of $\MPV(3,3)$:
\begin{align*}
&Li_{2,1}(\nu,1),\quad Li_{1,1,1}(\nu,\nu,1),
\quad  Li_{1,1,1}(\nu^2,1,\nu^2),\quad
Li_{1,1,1}(\nu^2,\nu^2,\nu^2),\\
&Li_{2,1}(1,\nu),\quad  Li_{1,1,1}(\nu,\nu,\nu),\quad
Li_{1,1,1}(\nu^2,\nu^2,1),\quad Li_{1,1,1}(\nu,\nu^2,\nu^2).
\end{align*}
Further we can show by using RDS that
\begin{align*}
\gz(3)=&63L_{1,1,1}(\nu,\nu,1)+63L_{1,1,1}(\nu^2,\nu^2,\nu^2)
+18L_{1,1,1}(\nu^2,1,\nu^2)-126L_{2,1}(\nu,1)\\
-&81L_{1,1,1}(\nu^2,\nu^2,1)-9L_{1,1,1}(\nu,\nu,\nu)
-54L_{1,1,1}(\nu,\nu^2,\nu^2)-18L_{2,1}(1,\nu),
\end{align*}
while
\begin{align*}
\gz_\gtwo(1,0,0,0,2,0)=
& 52L_{1,1,1}(\nu^2,\nu^2,1)+84L_{2,1}(\nu,1)
-\frac{112}{3} L_{1,1,1}(\nu^2,\nu^2,\nu^2)
  +\frac{8}{3} L_{1,1,1}(\nu,\nu,\nu)\\
+&\frac{116}{3} L_{1,1,1}(\nu,\nu^2,\nu^2)+12L_{2,1}(1,\nu)
-\frac{124}{3}L_{1,1,1}(\nu,\nu,1)
 -\frac{44}{3} L_{1,1,1}(\nu^2,1,\nu^2).
\end{align*}
Clearly $\gz_\gtwo(1,0,0,0,2,0)$ is not a rational multiple of $\gz(3)$
(under the assumption of a variant of Grothendieck's period conjecture)
since otherwise $\dim_\Q\MPV(3,3)<8$.
Therefore $\gz_\gtwo(1,0,0,0,1,0)$ cannot lie in $\MPV(3,1)$
which is generated by $\gz(3)=\gz(2,1)$.

By applying our reduction process in the proof of Theorem~\ref{thm:main}
and then using GiNac we may also compute numerically
\begin{align*}
\gz_\gtwo(1,1,1,1,1,1)=&
0.01110008020419022277039637092289017833699879038194\backslash\\
&\phantom{0.}45086731059450527930767014636924683152705726916209\dots
\end{align*}
and verify directly that
$$\sum_{m,n=1}^{300} \frac{1}{mn(m+n)(m+2n)(m+3n)(2m+3n)}
=0.01110008013\dots.$$
Similarly
\begin{align*}
\gz_\gtwo(2,1,1,1,1,1)=&
0.00995272345287837349624059820619791150631359962925\backslash\\
&\phantom{0.}76058197642267125491591895578259027698791211370695\dots\\ 
\gz_\gtwo(1,2,1,1,1,1)=&
0.01051743558635248267821710084904837131835692774970\backslash\\
&\phantom{0.}58926903804614080083290282688685278848741106023759\dots\\
\gz_\gtwo(1,1,2,1,1,1)=&
0.00497203096318456908722247199264938834521998397322\backslash\\
&\phantom{0.}59104469026433274260043289411830973101381601191180\dots\\
\gz_\gtwo(1,1,1,2,1,1)=&
0.00334830993415208689505176677618470935993308677601\backslash\\
&\phantom{0.}07755998804241800305039265855252311030101487133354\dots\\
\gz_\gtwo(1,1,1,1,2,1)=&
0.00252989581107351464804145549755978591413008088242\backslash\\
&\phantom{0.}72342790902902770229510932678198294307546061097483\dots\\
\gz_\gtwo(1,1,1,1,1,2)=&
0.00199953266044219311834226765773032132006109764665\backslash\\
&\phantom{0.}46802961304161882193488113910506120491878470601712\dots\\
\end{align*}
Computation by Maple of the partial sums $\sum_{m,n=1}^{300}$
of the above values
shows that they agree with each other for at least the first ten
decimal digits. Finally, the most interesting special value is
\begin{align*}
\gz_\gtwo(2,2,2,2,2,2)=&
0.00007135906438752907355938750633729102655371125360\backslash\\
&\phantom{0.}34722293695865502120914689101295543182083895277791\dots.
\end{align*}
By the linear relation detection program PSLQ implemented
with EZ-face \cite{EZface} we find that
\begin{equation}\label{equ:222222}
    \gz_\gtwo(2,2,2,2,2,2)\overset{?}{=}
    \frac{2^6\cdot 5^3\cdot 23}{3^6\cdot 7^2\cdot11\cdot13}\gz(2)^6
    =\frac{5^3\cdot 23}{3^{12}\cdot 7^2\cdot11\cdot13}\pi^{12}
\end{equation}
with the error bounded by $10^{-100}$. This is in agreement with the
original discovery of Witten that for any positive integer $m$
$\gz_\gtwo(\{2m\}_6)=c_m\pi^{12m}$ for some $c_m\in\Q$.
It might be possible to use the techniques developed in \cite{KMT2,KMT3}
to find a closed formula for $c_m$ involving Bernoulli numbers.

\bigskip
\noindent
\textbf{Acknowledgement.}
The author would like to thank the Max-Planck-Institut
f\"ur Mathematik for providing financial support
during his sabbatical leave when this work was done.
He also greatly benefited from several discussions with Andrey Levin,
Don Zagier and Wadim Zudilin, especially Levin's suggestion
to extend the results in \cite{Zso5,Zgenzeta}
to the exceptional Lie algebra $\frg_2$.

\end{document}